\newtheorem{theorem}{Theorem}
\newtheorem{corollary}{Corollary}
\newtheorem{remark}{Remark}
\newtheorem{lemma}{Lemma}
\theoremstyle{definition}
\newtheorem{example}{Example}
\theoremstyle{definition}
\numberwithin{theorem}{section}
\numberwithin{equation}{section}
\numberwithin{lemma}{section}
\numberwithin{remark}{section}
\numberwithin{corollary}{section}
\numberwithin{definition}{section}
\numberwithin{example}{section}
\numberwithin{proposition}{section}
\begin{document}

	\title{Implicit Linear Difference Equation over Residue Class Rings}

\author{M.V. Heneralov \thanks{me2001.com@gmail.com} }
\author{A.L. Piven'\thanks{aleksei.piven@karazin.ua} }
\affil{Department of Mathematics \& Computer Sciences\\
	 V. N. Karazin Kharkiv National University}

	\date{}
		\maketitle
		
		\begin{abstract}
			We investigate the first order implicit linear difference equation over
residue class rings modulo $m$. We prove an existence criterion and establish the amount of solutions for this equation.
We obtain analogous results for the initial problem of the considered equation.
The examples which illustrate the developed theory are given.
			
			Keywords: implicit linear difference equation, ring, residue class, initial problem\\
			\textit{2010 Mathematics Subject Classification.} 39A99, 16P50
		\end{abstract}

\section{Introduction}

The theory of the linear difference equations is an important branch of mathematics, having a series of different applications (see, for example, \cite{HW}--\cite{Ca}). The theory of implicit linear difference equations in vector spaces was developed in the 80s--90s of the 20 century (see, for example, \cite{Ca}--\cite{BR}).  Unlike the classical theory, the non-invertible operators
have an important role in the new theory. Therefore the interesting problem of the investigation of the implicit linear difference equation with non-invertible coefficients from the any commutative ring appeared. At the moment implicit difference equations over the ring of integers were studied more detailed \cite{GGG}--\cite{GP}. In \cite{mag}  these equations in different classes of topological vector spaces were investigated.

In this paper the first order implicit linear difference equations over residue classes rings is investigated. Let $\mathbb{Z}_m=\mathbb{Z}/m\mathbb{Z}$ be the residue class ring modulo $m$,
where $m\in\mathbb{N}$, $m\ge 2$.
Let $A, B, Y_0\in\mathbb{Z}_m$ and let $\{F_n\}_{n=0}^{\infty}$ be a sequence of $\mathbb{Z}_m$. Consider the initial problem
\begin{equation}\label{1}
	BX_{n+1}=AX_n+F_n,\quad n\in\mathbb{Z}_+,
\end{equation}
\begin{equation}\label{ic}
	X_0=Y_0,
\end{equation}
where $\mathbb{Z}_+$ denotes the set of non-negative integers. The  sequence $\{X_n\}_{n=0}^{\infty}$ of elements of $\mathbb{Z}_m$ is called a solution of the initial problem \eqref{1}, \eqref{ic}, if it satisfies  Equation~\eqref{1} and the initial condition~\eqref{ic}. Equation~\eqref{1} is called implicit, if $B$ is a non-invertible element of the ring $\mathbb{Z}_m$.  If $B$ is an invertible element of $\mathbb{Z}_m$, then this equation  is called explicit. Let $a,b$ are representatives of classes $A,B$ respectively. In the Section 2 we prove that if the greater common divisor of numbers $a,b,m$ is equal to 1, then Equation \eqref{1} is decomposed to the explicit equation \eqref{2} and the implicit equation \eqref{3} which has a unique solution (see lemmas \ref{lemma:isom}, \ref{lemels} and Theorem \ref{2n3:solv}). Theorem \ref{2n3:solv} also gives the general solution for these equations.
The main results of this paper are represented in Section 3 (see theorems \ref{solvability:ip} and \ref{solvability:equation}). Theorem \ref{solvability:ip} describes  necessary and sufficient conditions for the solvability, an amount of solutions and the general solution for the initial problem \eqref{1}, \eqref{ic}. This theorem gives the full description of all possible situations for the initial problem \eqref{1}, \eqref{ic}. The analogous results for Equation \eqref{1} are established in Theorem \ref{solvability:equation}.  This theorem leads to the criteria of the existence and uniqueness of a solution for Equation~\eqref{1} (see Corollaries \ref{solv:cor:2}, \ref{solv:cor:1}). As in the Fredholm theory (see, for example, \cite[Chapter 7]{DS}), Corollary \ref{solv:cor:4} shows that if corresponding to \eqref{1} homogeneous equation has only trivial solution then for any  sequence $\{F_n\}_{n=0}^{\infty}$ of $\mathbb{Z}_m$ Equation \eqref{1} has a unique solution. Section 4 of the present paper contains the examples, which illustrate the constructed theory (see Examples \ref{ex:1}--\ref{ex:4}).

Through this paper  $[t]_s$ denotes the class of the element $t\in\mathbb{Z}$ of the ring $\mathbb{Z}_s$, where $s\in\mathbb{N}$. The ring $\mathbb{Z}_1$ means as the null ring. For the numbers $n_1, n_2, \ldots, n_N\in\mathbb{Z}$ such that $|n_1|+|n_2|+ \ldots +|n_N|\neq 0$ the symbol ${\rm gcd}\left(n_1, \ldots, n_N\right)$ denotes their positive greater common divisor. If $T$ is a nilpotent element of the ring $\mathbb{Z}_s$, then ${\rm ind}(T)$ denotes the nilpotency index of $T$.	

\section{Preliminary}\label{sec:1}

Through this paper $m\ge 2$, $m\in\mathbb{N}$. Let  $A, B$ and $F_n\ (n\in\mathbb{Z}_+)$ be given elements of the ring $\mathbb{Z}_m$.
For each of elements $A, B,  Y_0, F_n, X_n\in \mathbb{Z}_m\ (n\in\mathbb{Z}_+)$ denote, respectively, their representatives
$a, b, f_n, y_0, x_n$.

By Fundamental Theorem of Arithmetic, there exists  pairwise different primes $p_1,\ldots,p_r$
and numbers $k_1,\ldots,k_r\in\mathbb{N}$ such that
$\displaystyle m=\prod_{j=1}^r p_j^{k_j}$.

Denote
$$
	m_1 = \prod_{j\colon p_j\nmid b} p_j^{k_j},\quad 	m_2 =  \prod_{j\colon p_j\mid b} p_j^{k_j},
$$
where $m_1=1$ in the case $p_j\mid b\ (j=1,\ldots,r)$ and $m_2=1$ in the case $p_j\nmid b\ (j=1,\ldots,r)$.
Obviously, $m_1\cdot m_2=m$, and ${\rm gcd}\left(m_1, m_2\right)=1$.

Introduce the natural projections $	\pi_i\colon \mathbb{Z}_m\to\mathbb{Z}_{m_i}$,
defined as follows:
$$
	\pi_i\left(T\right)=[t]_{m_i}, \ \forall T=[t]_{m},\quad
	i=1, 2.
$$
(see \cite[p. 381--382]{D}).

For each $i=1,2$, according to the \cite[p. 381--382]{D} the natural projections $\pi_i\ (i=1,2)$ are homomorphisms.

Denote
\begin{equation*}
	\begin{array}{l}
		A_i = \pi_i\left(A\right),\quad
        B_i = \pi_i\left(B\right),\quad
        Y_{i,0} = \pi_i\left(Y_0\right),\quad
				F_{i, n} = \pi_i\left(F_n\right),\quad 		i=1, 2.
	\end{array}
\end{equation*}

Let $m_1\not=1$, $m_2\not=1$. Introduce the  isomorphism

$$
	\psi\colon \mathbb{Z}_{m_1}\oplus\mathbb{Z}_{m_2}\to\mathbb{Z}_m,
$$
defined as follows (see, for example, \cite[Section 7.6 and Exercise 5 to the Section 7.6]{D}):
\begin{equation}\label{psi:def}
	\psi\left(T_1, T_2\right) = [
		t_1e_1 m_2 +
		t_2e_2 m_1]_m
,\quad
	\forall T_1=[t_1]_{m_1}, \ \forall T_2=[t_2]_{m_2},
\end{equation}
where
\begin{equation}\label{psi:Ei:Def}
	E_1=[e_1]_{m_1}=[m_2]_{m_1}^{-1},\quad E_2=[e_2]_{m_2}=[m_1]_{m_2}^{-1}.
\end{equation}

We regard that since ${\rm gcd}\left(m_1, m_2\right)=1$, the inverse elements $E_1$ and $E_2$ are defined.

If $T_1\in\mathbb{Z}_{m_1}, T_2\in\mathbb{Z}_{m_2}$, then definition of $\psi$ implies
\begin{equation}\label{pi:of:psi}
	\pi_i\left(\psi\left(T_1, T_2\right)\right)=T_i,
	\quad i=1,2.
\end{equation}

 Also, $\pi_1^{-1}\left(T_1\right)\cap\pi_2^{-1}\left(T_2\right)$ is a one-element set, $\psi\left(T_1, T_2\right)$ is an element of this set. This means that
\begin{equation}\label{ch:cor:isom}
	\displaystyle \{\psi\left(T_1, T_2\right)\} = \pi_1^{-1}\left(T_1\right) \cap \pi_2^{-1}\left(T_2\right).
\end{equation}

Consider the following equations over rings $\mathbb{Z}_{m_1}$ and $\mathbb{Z}_{m_2}$ respectively:
\begin{equation}\label{2}
	B_1 X_{1,n+1} = A_1 X_{1,n} + F_{1,n}, \quad n\in\mathbb{Z}_+,
\end{equation}
\begin{equation}\label{3}
	B_2 X_{2,n+1} = A_2 X_{2,n} + F_{2,n}, \quad n\in\mathbb{Z}_+.
\end{equation}

The following lemma describes the connection between solutions of Equation \eqref{1} and  equations~\eqref{2}, \eqref{3}.

\begin{lemma}\label{lemma:isom}
	Let $m_1\neq 1, m_2\neq 1$. The sequence
	\begin{equation}\label{xnpsidef}
	  X_n=\psi\left(X_{1,n}, X_{2,n}\right), \quad n\in\mathbb{Z}_+,
	\end{equation}
	is a solution of Equation~\eqref{1} iff the sequences $\{X_{1,n}\}_{n=0}^{\infty}$ and $\{X_{2,n}\}_{n=0}^{\infty}$ are solutions of equations~\eqref{2}, \eqref{3}, respectively.	
	Moreover, $X_{i,n}=\pi_i\left(X_n\right)$, $i=1,2,\ n\in\mathbb{Z}_+$.
\end{lemma}

\begin{proof}
    The equalities~\eqref{xnpsidef} and~\eqref{pi:of:psi} yield together the equality for $X_{i,n}$: $\pi_i\left(X_n\right)=X_{i,n}$, $i=1,2$.
	
Since $\pi_i\ (i=1,2)$ are homomorphisms, by the equality~\eqref{xnpsidef},
    $$
        \pi_i\left(BX_{n+1} - AX_n - F_n\right)=B_iX_{i,n+1} - A_iX_{i,n} - F_{i,n},\quad i=1,2, \quad n\in\mathbb{Z}_+.
    $$

By the equality~\eqref{ch:cor:isom}, we obtain:

\begin{equation}\label{eququ}
        BX_{n+1} - AX_n - F_n
        =\psi\left(B_1X_{1,n+1} - A_1X_{1,n} - F_{1,n}, B_2X_{2,n+1} - A_2X_{2,n} - F_{2,n}\right),\quad n\in\mathbb{Z}_+.
    \end{equation}
	
We note that
    \begin{equation}\label{pi:i:is:0}
        \pi_1\left(0\right)=0\mbox{ and }\pi_2\left(0\right)=0.
    \end{equation}
    Since~\eqref{pi:i:is:0}, \eqref{eququ} hold, we obtain that the equality \eqref{1} is fulfilled if and only if  equalities \eqref{2}, \eqref{3} are fulfilled. This ends the proof of the lemma.
\end{proof}

Introduce the notation:
$$
	d={\rm gcd}\left(a, b, m\right).
$$

Consider the equations~\eqref{2} and~\eqref{3}. The following lemma establishes important  properties for coefficients of these equations.

\begin{lemma}\label{lemels}
	The following statements hold.
	\begin{enumerate}
		\item Let $m_1\neq 1$, then $B_1$ is invertible.
		\item Let $m_2\neq 1$, then $B_2$ is nilpotent. If additionally $d=1$, then $A_2$ is invertible.
        \item $B$ is nilpotent if and only if $m_1=1$.
	\end{enumerate}
\end{lemma}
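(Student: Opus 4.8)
The plan is to reduce all three statements to two standard criteria for elements of a residue class ring: an element $[c]_n\in\mathbb{Z}_n$ is invertible if and only if ${\rm gcd}\left(c, n\right)=1$ (equivalently, no prime dividing $n$ divides $c$), and $[c]_n$ is nilpotent if and only if every prime dividing $n$ divides $c$ (since $[c]_n$ nilpotent means $n\mid c^k$ for some $k$, which forces $p_j\mid c$ for each $p_j\mid n$, and conversely). Once these are in hand, each claim follows by comparing the prime factorizations of $m_1$, $m_2$, and $b$ as dictated by the definitions $m_1=\prod_{j\colon p_j\nmid b} p_j^{k_j}$ and $m_2=\prod_{j\colon p_j\mid b} p_j^{k_j}$, which partition the prime factors of $m$ according to divisibility of $b$.

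For statement 1, I would observe that by construction every prime factor $p_j$ of $m_1$ satisfies $p_j\nmid b$; hence ${\rm gcd}\left(b, m_1\right)=1$ and $B_1=[b]_{m_1}$ is invertible. Statement 3 is essentially the contrapositive bookkeeping of the same idea: $m_1=1$ holds precisely when there is no prime $p_j\mid m$ with $p_j\nmid b$, i.e.\ when every prime dividing $m$ divides $b$; by the nilpotency criterion this is exactly the condition for $B=[b]_m$ to be nilpotent, which yields both implications at once.

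For statement 2, the nilpotency of $B_2$ is immediate: every prime factor $p_j$ of $m_2$ divides $b$ by the definition of $m_2$, so $[b]_{m_2}$ is nilpotent. The only step requiring a genuine argument is the invertibility of $A_2$ under the extra hypothesis $d=1$. Here I would argue by contradiction: if $A_2=[a]_{m_2}$ were not invertible, some prime $p_j\mid m_2$ would divide $a$. But $p_j\mid m_2$ forces both $p_j\mid b$ and $p_j\mid m$, so $p_j$ would be a common divisor of $a$, $b$, and $m$, contradicting $d={\rm gcd}\left(a, b, m\right)=1$. Hence ${\rm gcd}\left(a, m_2\right)=1$ and $A_2$ is invertible.

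The main obstacle is conceptual rather than computational: one must keep straight that $m_1$ and $m_2$ split the prime factors of $m$ by whether they divide $b$, and that the hypothesis $d=1$ is precisely what couples $a$ into this partition where it is needed for the $A_2$ claim. Everything else is a direct application of the invertibility and nilpotency criteria for $\mathbb{Z}_n$, so I expect no serious difficulty beyond establishing (or citing) those two criteria.
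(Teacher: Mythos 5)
Your proposal is correct and follows essentially the same route as the paper: statement 1 via ${\rm gcd}\left(b,m_1\right)=1$, statement 2 via the nilpotency of $[b]_{m_2}$ and the contradiction argument showing a common prime divisor of $a$, $b$, $m$ would divide $d$, and statement 3 via the equivalence of $m_1=1$ with every prime factor of $m$ dividing $b$. The only cosmetic difference is that the paper exhibits an explicit nilpotency exponent $k=\max_{j}k_j$ for $B_2$, whereas you invoke (and correctly sketch) the general nilpotency criterion in $\mathbb{Z}_n$.
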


\begin{proof}
	Proof the statement~1. The definition of $m_1$ and $B$ implies the equality ${\rm gcd}\left(b, m_1\right)=1$. Hence, the element $B_1$ is an invertible element of $\mathbb{Z}_{m_1}$.
	
	Proof the statement~2.    Firstly prove that $B_2$ is nilpotent. Is it evident from the definition of $m_2$: if $k=\displaystyle\max_{j=1,\ldots,r}\{k_j\}$, then $B_2^k=[b^k]_{m_2}=0$.

    Let $d=1$. We will prove that ${\rm gcd}\left(a, m_2\right)=1$. Assuming the contrary, we obtain that there exists $j\in \{1,\ldots,r\}$ such that $p_j\mid a$. This condition yields $p_j\mid a, p_j\mid b, p_j\mid m$. Hence $p_j\mid d$. But it contradicts $d=1$. Therefore ${\rm gcd}\left(a, m_2\right)=1$. This means that $A_2$ is an invertible element of $\mathbb{Z}_{m_2}$.

    Proof the statement~3. The condition $m_1=1$ is equivalent to the assertion
    $$
        \forall j=1,\ldots,r\colon p_j\mid b.
    $$

    The last condition is equivalent to the nilpotency of the element $B$ in $\mathbb{Z}_m$.
\end{proof}

\begin{remark}\rm
Lemma \ref{lemels} is an analogue of the spectral decomposition of a regular  operator pencil in Banach spaces (see \cite[Lemma 2.1]{R}). The analogous to \eqref{2}, \eqref{3} decomposition of an implicit difference equation in Banach spaces into two equations  with regarded properties was obtained in \cite{BR,BV}.
\end{remark}

The following theorem is a solvability theorem for Equation~\eqref{2} and, in the case $d=1$, for Equation~\eqref{3}.

\begin{theorem}\label{2n3:solv}
	The following statements hold.
	
	\begin{enumerate}
		\item Let $m_1\neq 1$. The general solution of Equation~\eqref{2} is defined by the following formula:
		\begin{equation}\label{2:sol}
			X_{1,n} = B_1^{-n}A_1^nX_{1,0} + \sum_{s=0}^{n-1}A_1^sB_1^{-s-1}F_{1,n-s-1}, \quad n\in\mathbb{N},
		\end{equation}
		where $X_{1,0}$ is an arbitrary element of $\mathbb{Z}_{m_1}$.
		\item Let $d=1$ and $m_2\neq 1$. Then Equation~\eqref{3} has a unique solution, defined by the following formula:
		\begin{equation}\label{3:sol}
			X_{2,n} = - \sum_{s=0}^{{\rm ind}\left(B_2\right)-1}A_2^{-s-1}B_2^sF_{2,n+s}, \quad n\in\mathbb{Z}_+.
		\end{equation}
	\end{enumerate}
\end{theorem}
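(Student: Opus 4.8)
The plan is to treat the two statements separately, each reducing to a standard first-order linear recurrence once the relevant coefficient is inverted; commutativity of $\mathbb{Z}_{m_i}$ is used throughout to freely rearrange powers and inverses of $A_i$ and $B_i$.

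For statement~1, by Lemma~\ref{lemels} the element $B_1$ is invertible in $\mathbb{Z}_{m_1}$, so Equation~\eqref{2} is explicit. Multiplying by $B_1^{-1}$ turns it into $X_{1,n+1}=B_1^{-1}A_1X_{1,n}+B_1^{-1}F_{1,n}$, a classical explicit recurrence in which each term is determined by its predecessor. I would then prove formula~\eqref{2:sol} by induction on $n$: the case $n=0$ is the identity $X_{1,0}=X_{1,0}$, and substituting the formula for $X_{1,n}$ into the recurrence yields the formula for $X_{1,n+1}$ after collapsing $B_1^{-1}(B_1^{-n}A_1^n)=B_1^{-n-1}A_1^n$ and absorbing the free term into the sum. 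Since the recurrence determines $X_{1,n}$ uniquely from $X_{1,0}$ while $X_{1,0}\in\mathbb{Z}_{m_1}$ is arbitrary, this family is exactly the general solution. A reindexing $s\mapsto n-1-s$ puts the convolution sum into the stated form.

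For statement~2 the equation is implicit, and this is where the argument is genuinely different. By Lemma~\ref{lemels} the hypothesis $d=1$ forces $A_2$ to be invertible while $B_2$ is nilpotent; write $\nu={\rm ind}(B_2)$, so $B_2^{\nu}=0$. Solving Equation~\eqref{3} for the lower index gives $X_{2,n}=A_2^{-1}B_2X_{2,n+1}-A_2^{-1}F_{2,n}$, expressing each term through the next one. Setting $D=A_2^{-1}B_2$ (nilpotent of the same index, since $D^{k}=A_2^{-k}B_2^{k}$), I would iterate this relation $\nu$ times to obtain
$$
    X_{2,n}=D^{\nu}X_{2,n+\nu}-\sum_{s=0}^{\nu-1}D^{s}A_2^{-1}F_{2,n+s},\quad n\in\mathbb{Z}_+.
$$

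The crux is that the remainder carries the factor $D^{\nu}=A_2^{-\nu}B_2^{\nu}=0$, so it vanishes regardless of the unknown $X_{2,n+\nu}$; rewriting $D^{s}A_2^{-1}=A_2^{-s-1}B_2^{s}$ yields precisely~\eqref{3:sol}. This single computation does double duty: it shows any solution is forced to coincide with~\eqref{3:sol} (uniqueness), since no free choice ever entered, and it furnishes a candidate solution whose validity I would confirm by direct substitution. In that verification the telescoping of $A_2X_{2,n}+F_{2,n}$ against $B_2X_{2,n+1}$ leaves exactly one boundary term, $-A_2^{-\nu}B_2^{\nu}F_{2,n+\nu}$, which again vanishes because $B_2^{\nu}=0$. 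I expect this boundary-term cancellation --- the point at which nilpotency of $B_2$ is indispensable, and which has no analogue in the explicit case --- to be the main thing to get right.
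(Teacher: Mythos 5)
Your proposal is correct and follows essentially the same route as the paper's proof: statement~1 is handled by inverting $B_1$ and reducing to the explicit recurrence $X_{1,n+1}=B_1^{-1}A_1X_{1,n}+B_1^{-1}F_{1,n}$ (the paper simply cites a textbook for the resulting formula where you sketch the induction), and statement~2 is obtained exactly as in the paper by iterating the backward relation $X_{2,n}=A_2^{-1}B_2X_{2,n+1}-A_2^{-1}F_{2,n}$ until nilpotency of $B_2$ annihilates the remainder $A_2^{-\nu}B_2^{\nu}X_{2,n+\nu}$, then verifying the resulting candidate by direct substitution with the same vanishing boundary term. No gaps.
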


\begin{remark}\rm	The corresponding inverse  elements  exist according to Lemma~\ref{lemels}.
\end{remark}

\begin{proof}
	Prove firstly the statement~1.	
	By the statement~1 of Lemma~\ref{lemels}, $B_1$ is invertible. The equality~\eqref{2} is equivalent to the equality
	\begin{equation}\label{rec:labelproof23:1}
		X_{1,n+1} = B_1^{-1}A_1X_{1,n} + B_1^{-1}F_{1,n}, \quad n\in\mathbb{Z}_+.
	\end{equation}
	
	According to~\cite[p.~4]{Ela}, the general solution of Equation~\eqref{rec:labelproof23:1} has the form~\eqref{2:sol}.
	
	Prove now the statement~2.
	By the statement~2 of Lemma~\ref{lemels}, $A$ is an invertible element, and $B$ is nilpotent (these are since $d=1$).
	
	The equality~\eqref{3} is equivalent to the following:
	
	\begin{equation}\label{rec:labelproof23:2}
		X_{2,n} = - A_2^{-1}F_{2,n} + A_2^{-1}B_2X_{2,n+1}, \quad n\in\mathbb{Z}_+.
	\end{equation}
	
	Applying~\eqref{rec:labelproof23:2} recurrently few times, obtain the equality~\eqref{3:sol}.
	
    Now let $\{X_n\}_{n=0}^{\infty}$ be defined by the formula~\eqref{3:sol}. Denote $k={\rm ind}\left(B_2\right)$. Substituting~\eqref{3:sol} to the left part of Equation~\eqref{3}, we obtain:
	\begin{multline*}
		B_2X_{n+1}=
		-B_2A_2^{-1}\sum_{s=0}^{k-1}A_2^{-s}B_2^sF_{2,n+1+s} =
		- \sum_{s=0}^{k-1}A_2^{-s-1}B_2^{s+1}F_{2,n+s+1} = - \sum_{t=1}^kA_2^{-t}B_2^tF_{2,n+t}=\\
		= - \sum_{t=0}^kA_2^{-t}B_2^tF_{2,n+t} + F_{2,n} =- A_2\cdot A_2^{-1}\sum_{t=0}^{k-1}A_2^{-t}B_2^tF_{2,n+t} + F_{2,n} =
		A_2X_{2,n}+F_{2,n}.
\end{multline*}

	Therefore $\{X_{2,n}\}_{n=0}^{\infty}$, defined by the formula~\eqref{3:sol}, is the unique solution of Equation~\eqref{3}.
\end{proof}

\begin{corollary}\label{2n3:solv:cor}
	The following statements hold.
	
	\begin{enumerate}
		\item Let $d=1$ and $m_1=1$. Equation~\eqref{1} has a unique solution $\{X_n\}_{n=0}^{\infty}$, defined by the following formula:
		\begin{equation}\label{1:2n3:solv:cor:1}
			X_n = - \sum_{s=0}^{{\rm ind}\left(B\right)-1}A^{-s-1}B^sF_{n+s}, \quad n\in\mathbb{Z}_+.
		\end{equation}
		\item Let $m_2=1$. The general solution of Equation~\eqref{1} is defined by the following formula:
		\begin{equation}\label{1:2n3:solv:cor:2}
			X_n = B^{-n}A^nX_0 + \sum_{s=0}^{n-1}A^sB^{-s-1}F_{n-s-1}, \quad n\in\mathbb{N},
		\end{equation}
		where $X_0$ is an arbitrary element of $\mathbb{Z}_m$.
	\end{enumerate}
\end{corollary}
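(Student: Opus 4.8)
The plan is to recognize both statements as the degenerate instances of Theorem~\ref{2n3:solv} in which one of the two moduli $m_1,m_2$ collapses to $1$, so that the surviving natural projection becomes the identity map and Equation~\eqref{1} coincides with one of the split equations~\eqref{2},~\eqref{3}. This bypasses the isomorphism machinery entirely.

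For the first statement I would start from the hypothesis $m_1=1$. Since $m_1m_2=m$ and $m\ge 2$, this forces $m_2=m\ge 2$, so $m_2\neq 1$. Now the projection $\pi_2\colon\mathbb{Z}_m\to\mathbb{Z}_{m_2}=\mathbb{Z}_m$ sends $[t]_m$ to $[t]_{m_2}=[t]_m$ and is therefore the identity. Consequently $A_2=A$, $B_2=B$, $F_{2,n}=F_n$, and Equation~\eqref{3} coincides with Equation~\eqref{1}. With the hypotheses $d=1$ and $m_2\neq 1$ in force, I would apply statement~2 of Theorem~\ref{2n3:solv}: Equation~\eqref{3}, hence Equation~\eqref{1}, has a unique solution given by~\eqref{3:sol}. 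Substituting $A_2=A$, $B_2=B$, $F_{2,n}=F_n$ and noting ${\rm ind}(B_2)={\rm ind}(B)$ turns~\eqref{3:sol} into the claimed formula~\eqref{1:2n3:solv:cor:1}.

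The second statement is entirely parallel. From $m_2=1$ I would deduce $m_1=m\neq 1$, so that $\pi_1\colon\mathbb{Z}_m\to\mathbb{Z}_{m_1}=\mathbb{Z}_m$ is the identity, giving $A_1=A$, $B_1=B$, $F_{1,n}=F_n$ and identifying Equation~\eqref{2} with Equation~\eqref{1}. Statement~1 of Theorem~\ref{2n3:solv} then delivers the general solution~\eqref{2:sol}, which upon the same substitutions becomes~\eqref{1:2n3:solv:cor:2}, with $X_0$ an arbitrary element of $\mathbb{Z}_m$.

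There is no genuine obstacle here; the only point demanding care is that Lemma~\ref{lemma:isom} and the isomorphism $\psi$ cannot be invoked in these cases, since both require $m_1\neq 1$ \emph{and} $m_2\neq 1$. The reduction to~\eqref{2} or~\eqref{3} is thus an equality of equations rather than an isomorphic transport of solutions through $\psi$.
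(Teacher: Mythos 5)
Your proposal is correct and is precisely the argument the paper intends: the corollary is stated as an immediate consequence of Theorem~\ref{2n3:solv}, obtained by observing that when one of $m_1,m_2$ equals $1$ the other equals $m$, the surviving projection $\pi_i$ is the identity, and Equation~\eqref{1} literally coincides with Equation~\eqref{2} or~\eqref{3}. Your remark that Lemma~\ref{lemma:isom} and $\psi$ are unavailable (and unneeded) in these degenerate cases is a correct and worthwhile point of care.
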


\section{Main results}\label{sec:2}

Here we obtain the solvability theorems over $\mathbb{Z}_m$ for Equation~\eqref{1} and for the initial problem~\eqref{1}, \eqref{ic}.

Introduce the following notations:
$$
    m'=\frac{m}{d},\
	Y'_0=[y_0]_{m'},\
	A'=[a/d]_{m'},\
	B'=[b/d]_{m'}.
$$
Also, when $d\mid f_n$ for all $n\in\mathbb{Z}_+$, denote
$$
    F'_n=[f_n/d]_{m'}, \quad n\in\mathbb{Z}_+.
$$

Each a prime divisor of the number $m'$ is also a divisor of $m$. Then by Fundamental Theorem of Arithmetic, there exist non-negative integers $l_j\le k_j$ ($j=1,\ldots,r$) such that $\displaystyle m'=\prod_{j=1}^r p_j^{l_j}$.

Denote also
$$
	m'_1 = \prod_{j\colon dp_j\nmid b}p_j^{l_j},\quad
    m'_2=\prod_{j\colon dp_j\mid b}p_j^{l_j},
$$
$$
	A'_i = [a/d]_{m'_i},\quad B_i' = [b/d]_{m'_i},\quad
    Y'_{i,0} = [y_0]_{m'_i},\quad i=1,2.
$$
As in the definition $m_1,m_2$, we assume $m'_1=1$ in the case $dp_j\mid b\ (j=1,\ldots,r)$ and $m'_2=1$ in the case $dp_j\nmid b\ (j=1,\ldots,r)$.
Note that if $d=1$, then $m'_i=m_i$, $i=1,2$.

Let $d\mid f_n$ for all $n\in\mathbb{Z}_+$. Denote
$$
    F'_{i,n} = [f_n/d]_{m_i'}.
$$
and consider the initial problem

\begin{equation}\label{1:dash}
    B'X'_{n+1}=A'X'_n+F'_n, \quad n\in\mathbb{Z}_+,
\end{equation}
\begin{equation}\label{ic:dash}
	X'_0=Y'_0
\end{equation}
over $\mathbb{Z}_{m'}$.

The following statement is a helpful lemma, which shows the connection between the equations \eqref{1} and ~\eqref{1:dash}.

\begin{lemma}\label{hst1}
    Let $d\neq 1$, $d\mid f_n$ ($n\in\mathbb{Z}_+$).  The sequence $\{X_n\}_{n=0}^{\infty}$ is a solution of Equation~\eqref{1} iff it admits the following representation
    \begin{equation}\label{lem:xnlocaldeflem}
        X_n=[x'_n+\alpha_n m']_m,\quad n\in\mathbb{Z}_+,
    \end{equation}
	where $X'_n=[x'_n]_{m'}\ (n\in\mathbb{Z}_+)$ is a solution of Equation~\eqref{1:dash}, and $\{\alpha_n\}_{n=0}^{\infty}$ is a sequence of $\{0,1,\ldots,d-1\}$. Moreover, the sequence $\{\alpha_n\}_{n=0}^{\infty}$ and the solution $\{X'_n\}_{n=0}^{\infty}$ of Equation ~\eqref{1:dash} with $x'_n\in\{0,\ldots,m'-1\}$ are uniquely determined by the solution~$\{X_n\}_{n=0}^{\infty}$ of Equation~\eqref{1}.
\end{lemma}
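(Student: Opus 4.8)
The plan is to translate both Equation~\eqref{1} and Equation~\eqref{1:dash} into integer congruences and exploit that $d$ divides each of $a$, $b$, $f_n$ and $m$. Writing $\tilde a=a/d$, $\tilde b=b/d$, $\tilde f_n=f_n/d$ (all integers, since $d\mid a,b,f_n$) and recalling $m=d\,m'$, a sequence $\{X_n\}=\{[x_n]_m\}$ satisfies Equation~\eqref{1} exactly when $m\mid(bx_{n+1}-ax_n-f_n)$ for every $n$. Every term on the right is divisible by $d$, so this factors as $d\,m'\mid d(\tilde b x_{n+1}-\tilde a x_n-\tilde f_n)$, which, dividing by the positive integer $d$, is equivalent to $m'\mid(\tilde b x_{n+1}-\tilde a x_n-\tilde f_n)$, i.e. to $\tilde b x_{n+1}\equiv\tilde a x_n+\tilde f_n\pmod{m'}$. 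The crucial feature is that this last condition depends on the integers $x_n$ only through their residues modulo $m'$, and it is precisely the representative form of Equation~\eqref{1:dash}.

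Next I would record the elementary bijection underlying the representation~\eqref{lem:xnlocaldeflem}: since $m=d\,m'$, division with remainder writes each canonical representative $x_n\in\{0,\ldots,m-1\}$ uniquely as $x_n=x_n'+\alpha_n m'$ with $x_n'\in\{0,\ldots,m'-1\}$ and $\alpha_n\in\{0,\ldots,d-1\}$, and conversely every such pair yields a distinct element $[x_n'+\alpha_n m']_m$ of $\mathbb{Z}_m$. In particular $x_n\equiv x_n'\pmod{m'}$, so the image of $X_n$ under the natural projection $\mathbb{Z}_m\to\mathbb{Z}_{m'}$ equals $[x_n']_{m'}$.

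With these two observations the equivalence is immediate in both directions. For the ``if'' direction, starting from a solution $\{X_n'\}=\{[x_n']_{m'}\}$ of~\eqref{1:dash} and an arbitrary sequence $\{\alpha_n\}$ in $\{0,\ldots,d-1\}$, I set $X_n=[x_n'+\alpha_n m']_m$; since its representative is congruent to $x_n'$ modulo $m'$, the congruence $\tilde b x_{n+1}'\equiv\tilde a x_n'+\tilde f_n\pmod{m'}$ coming from~\eqref{1:dash} yields $\tilde b x_{n+1}\equiv\tilde a x_n+\tilde f_n\pmod{m'}$, whence $\{X_n\}$ solves~\eqref{1} by the reduction of the first paragraph. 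For the ``only if'' direction, given a solution $\{X_n\}$ I take canonical representatives $x_n\in\{0,\ldots,m-1\}$, decompose $x_n=x_n'+\alpha_n m'$ as above, and reduce $\tilde b x_{n+1}\equiv\tilde a x_n+\tilde f_n\pmod{m'}$ modulo $m'$ to conclude that $\{[x_n']_{m'}\}$ solves~\eqref{1:dash}; the representation~\eqref{lem:xnlocaldeflem} then holds by construction. Uniqueness of $\{\alpha_n\}$ and of the reduced solution with $x_n'\in\{0,\ldots,m'-1\}$ is exactly the uniqueness of division with remainder applied to the canonical representative of each $X_n$.

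The one genuinely delicate point, which I would flag as the main obstacle, is that the passage from~\eqref{1} to~\eqref{1:dash} cannot be carried out merely by applying the projection $\mathbb{Z}_m\to\mathbb{Z}_{m'}$: under it the coefficients become $[b]_{m'}=d\,B'$ and $[a]_{m'}=d\,A'$, so the projected relation is only $d$ times~\eqref{1:dash} over $\mathbb{Z}_{m'}$, and since $d$ and $m'$ need not be coprime one cannot cancel $d$ inside $\mathbb{Z}_{m'}$. This is exactly why the argument must descend to integer congruences, where dividing through by $d$ is legitimate; the hypothesis $d\mid f_n$ is precisely what makes the right-hand side divisible by $d$ and hence makes this division possible.
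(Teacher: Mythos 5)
Your proof is correct and follows essentially the same route as the paper's: both reduce Equation~\eqref{1} to the integer congruence $bx_{n+1}\equiv ax_n+f_n\pmod{m}$, divide through by $d$ (using $d\mid a,b,f_n,m$) to obtain the congruence modulo $m'$ that characterizes Equation~\eqref{1:dash}, and then derive the representation~\eqref{lem:xnlocaldeflem} and its uniqueness from division with remainder. Your closing remark about why one cannot simply apply the projection $\mathbb{Z}_m\to\mathbb{Z}_{m'}$ (since $d$ need not be invertible modulo $m'$) is a sound observation that the paper leaves implicit.
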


\begin{proof}

   Obviously, Equation~\eqref{1} is equivalent to the congruence
	
	\begin{equation}\label{trata2}
		bx_{n+1}\equiv ax_n+f_n\pmod m, \quad n\in\mathbb{Z}_+.
	\end{equation}
	The congruence~\eqref{trata2} is equivalent to the following condition.
	
	\begin{equation}\label{trata2balahta}
		\frac{b}{d}x_{n+1}\equiv \frac{a}dx_n+\frac{f_n}d\pmod{m'}, \quad n\in\mathbb{Z}_+.
	\end{equation}

	The congruence~\eqref{trata2balahta} means that there exists
a solution $X'_n=[x'_n]_{m'}\ (n\in\mathbb{Z}_+)$ of Equation ~\eqref{1:dash} such that $x_n\equiv x'_n\pmod {m'}$.
Therefore $\{X_n\}_{n=0}^{\infty}$ is a solution of~\eqref{1} if and only if  $X_n=[x'_n + \alpha_n\cdot m']_m\ (n\in\mathbb{Z}_+)$, where $\{\alpha_n\}_{n=0}^{\infty}$ is an arbitrary sequence of $\{0,\ldots,d-1\}$.

       Suppose that the two following representatives for the solution of Equation \eqref{1} hold:
       $$X_n=[x'_n+\alpha_n m']_m=\left[\widehat{x'_n}+\widehat{\alpha_n}m'\right]_m,\quad n\in\mathbb{Z}_+,$$
       where
       $X'_n=[x'_n]_{m'}$, $\widehat{X'_n}=\left[\widehat{x'_n}\right]_{m'} (n\in\mathbb{Z}_+)$ are solutions of Equation ~\eqref{1:dash}, $\alpha_n,\ \widehat{\alpha_n}\ (n\in\mathbb{Z}_+)$ are numbers from $\{0,\ldots,d-1\}$ and additionally $x_n, \widehat{x_n}\in\{0,\ldots,m'-1\}$. It implies the following congruence
                 \begin{equation}\label{xnxnxn}
    	x'_n+\alpha_n m'
    	\equiv
    	\widehat{x'_n}+\widehat{\alpha_n}m'\pmod{m},\quad   	 n\in\mathbb{Z}_+.
    \end{equation}
	Then $x'_n\equiv \widehat{x'_n}\pmod{m'}$. According to the assumption $\widehat{x'_n}, x'_n\in\{0,\ldots,m'-1\}$, we have
 $x'_n=\widehat{x'_n},\ n\in\mathbb{Z}_+$. Now the congruence~\eqref{xnxnxn} means $\alpha_n\equiv \widehat{\alpha_n}\pmod{d}$. Since $\alpha_n,\widehat{\alpha_n}\in\{0,\ldots,d-1\}$, we have  $\alpha_n = \widehat{\alpha_n},\ n\in\mathbb{Z}_+$.
\end{proof}

The following theorem is a solvability theorem for the initial problem~\eqref{1}, \eqref{ic}. This theorem also establishes the explicit form for the general solution of the considered initial problem, when a solution exists.

\begin{theorem}\label{solvability:ip}
		The following statements hold.
	
	\begin{enumerate}
        \item The initial problem ~\eqref{1}, \eqref{ic} has a unique solution iff $d=1$ and one of the following conditions holds:
            \begin{enumerate}
            \item $m_2=1$;
            \item $m_2\neq 1$ and the equality
		\begin{equation}\label{3:y0}
			Y_{2,0} = - \sum_{s=0}^{{\rm ind}(B_2)-1}A_2^{-s-1}B_2^sF_{2, s}
		\end{equation}
		is fulfilled.
\end{enumerate}
Moreover, the unique solution of the initial problem~\eqref{1}, \eqref{ic} is defined by the formula
		\begin{equation}\label{solvability:ip:d:eq1:solexpl}
			X_n=\left\{
			\begin{array}{cc}
				\displaystyle
				B^{-n}A^nY_0 + \sum_{s=0}^{n-1}A^sB^{-s-1}F_{n-s-1},\quad  m_2=1,\\
				\displaystyle - \sum_{s=0}^{{\rm ind}(B) -1}A^{-s-1}B^sF_{n+s}, \quad m_1=1,\\
				\displaystyle\psi\left(
					B_1^{-n}A_1^nY_{1,0} + \sum_{s=0}^{n-1}A_1^sB_1^{-s-1}F_{1,n-s-1},
					- \sum_{s=0}^{{\rm ind}(B_2)-1}A_2^{-s-1}B_2^sF_{2,n+s}
				\right), \ m_1\neq 1, m_2\neq 1.
			\end{array}
			\right.
		\end{equation}
		\item The initial problem~\eqref{1}, \eqref{ic} has infinitely many solutions iff $d\neq 1$, $d\mid f_n$ for all $n\in\mathbb{Z}_+$ and one of the following conditions holds:
 \begin{enumerate}
 \item $m'_2=1$;
 \item $m'_2\neq 1$ and the equality
		\begin{equation}\label{3:sol:y0}
			Y_{2,0}' =
            -\sum_{s=0}^{{\rm ind}(B'_2)-1}\left(A'_2\right)^{-s-1}\left(B'_2\right)^sF'_{2,s}
		\end{equation}
		is fulfilled.
\end{enumerate}
		The general solution of the initial problem~\eqref{1}, \eqref{ic} is defined by
		\begin{equation}\label{th1:x'n:to:xn}
		  X_n=[x'_n + \alpha_n\cdot m']_m, \quad n\in\mathbb{N},
		\end{equation}
        where $X'_n=[x'_n]_{m'}\ (n\in\mathbb{Z}_+)$ is a solution of the initial problem~\eqref{1:dash}, \eqref{ic:dash} (this solution exists and is unique), and $\{\alpha_n\}_{n=1}^{\infty}$ is an arbitrary sequence of $\{0,1,\ldots,d-1\}$.
        Moreover, the sequence $\{\alpha_n\}_{n=1}^{\infty}$  is uniquely determined by the solution~$\{X_n\}_{n=0}^{\infty}$ of the initial problem \eqref{1}, \eqref{ic}.
        		\item The initial problem~\eqref{1}, \eqref{ic} has no solutions iff one of the following conditions holds:
        \begin{enumerate}
         \item $d\nmid f_n$ for some $n\in\mathbb{Z}_+$;
         \item $d\mid f_n\ (n\in\mathbb{Z}_+)$, $m'_2\neq 1$ and the equality~\eqref{3:sol:y0} is not fulfilled.
         \end{enumerate}
	\end{enumerate}
\end{theorem}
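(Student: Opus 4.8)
The plan is to prove the three statements together as a trichotomy: I would first observe that the hypotheses of statements~1, 2 and 3 are mutually exclusive and jointly exhaustive, and that the conclusions (a unique solution, infinitely many solutions, no solution) are pairwise incompatible. Hence it suffices to establish each ``if'' direction, and every ``only if'' direction then follows automatically from exhaustiveness. The whole argument splits according to the value of $d={\rm gcd}\left(a,b,m\right)$.

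First I would treat the case $d=1$, which yields statement~1, distinguishing the three regimes appearing in \eqref{solvability:ip:d:eq1:solexpl}. If $m_2=1$ then $B$ is invertible, and Corollary~\ref{2n3:solv:cor}(2) gives the general solution \eqref{1:2n3:solv:cor:2} with $X_0$ free; the condition \eqref{ic} fixes $X_0=Y_0$, producing a unique solution. If $m_1=1$ then $B$ is nilpotent and, since $d=1$, $A$ is invertible by Lemma~\ref{lemels}, so Corollary~\ref{2n3:solv:cor}(1) gives the unique solution \eqref{1:2n3:solv:cor:1} of \eqref{1}; its value at $n=0$ equals $Y_0$ precisely when \eqref{3:y0} holds (here $A_2=A$, $B_2=B$, $F_{2,s}=F_s$, $Y_{2,0}=Y_0$), and otherwise no solution meets \eqref{ic}. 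If $m_1\neq 1$ and $m_2\neq 1$, I would invoke Lemma~\ref{lemma:isom}: a solution of \eqref{1} is $X_n=\psi(X_{1,n},X_{2,n})$ with $X_{i,n}=\pi_i(X_n)$, so \eqref{ic} becomes $X_{i,0}=Y_{i,0}$. By Theorem~\ref{2n3:solv}(1) the explicit component \eqref{2} has a unique solution once $X_{1,0}=Y_{1,0}$ is imposed, while by Theorem~\ref{2n3:solv}(2) the implicit component \eqref{3} already has the unique solution \eqref{3:sol}, whose value at $n=0$ equals $Y_{2,0}$ exactly when \eqref{3:y0} holds. Assembling the two components through $\psi$ yields the unique solution and the third branch of \eqref{solvability:ip:d:eq1:solexpl}; when \eqref{3:y0} fails there is no solution.

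Next I would treat $d\neq 1$. If $d\nmid f_n$ for some $n$, then reducing the defining congruence \eqref{trata2} modulo $d$ and using $d\mid a$, $d\mid b$, $d\mid m$ forces $d\mid f_n$, a contradiction; hence statement~3(a) holds. If instead $d\mid f_n$ for all $n$, the key observation is that ${\rm gcd}\left(a/d,b/d,m'\right)=1$, so the primed initial problem \eqref{1:dash}, \eqref{ic:dash} over $\mathbb{Z}_{m'}$ is itself a ``$d=1$'' problem whose data $m'_1,m'_2,A'_2,B'_2$ play the roles of $m_1,m_2,A_2,B_2$ (indeed $p_j\mid b/d$ iff $dp_j\mid b$). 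Applying statement~1 to it shows it has a unique solution when $m'_2=1$, or when $m'_2\neq 1$ and \eqref{3:sol:y0} holds, and no solution otherwise. I would then transfer this back to \eqref{1}, \eqref{ic} through Lemma~\ref{hst1}: every solution has the form $X_n=[x'_n+\alpha_n m']_m$ with $\{X'_n\}$ solving \eqref{1:dash} and $\alpha_n\in\{0,\ldots,d-1\}$. Reducing $X_0=Y_0$ modulo $m'$ forces $X'_0=Y'_0$, so $\{X'_n\}$ must solve the primed initial problem; the congruence $x'_0+\alpha_0 m'\equiv y_0\pmod m$ then determines $\alpha_0$ uniquely in $\{0,\ldots,d-1\}$, while $\alpha_1,\alpha_2,\ldots$ remain free. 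Since $d\geq 2$, this produces infinitely many solutions, parametrized as in \eqref{th1:x'n:to:xn} with $\{\alpha_n\}_{n=1}^{\infty}$ arbitrary and uniquely recovered from $\{X_n\}$ by the uniqueness clause of Lemma~\ref{hst1}; this is statement~2. If the primed initial problem is unsolvable (that is, $m'_2\neq 1$ and \eqref{3:sol:y0} fails), then no $\{X'_n\}$ exists, hence no solution of \eqref{1}, \eqref{ic} exists either, giving statement~3(b).

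Finally, I would note that the listed hypotheses exhaust all cases and the three conclusions are incompatible, so the ``iff'' equivalences in all three statements follow at once. I expect the main obstacle to be the bookkeeping of the initial condition across the reductions: showing cleanly that in the regime $m_1\neq 1$, $m_2\neq 1$ the implicit component leaves no freedom to match $Y_{2,0}$, so that \eqref{3:y0} is exactly the obstruction, and that in the regime $d\neq 1$ the condition $X_0=Y_0$ pins down both $\alpha_0$ and $X'_0$ while liberating the remaining $\alpha_n$. The conceptual crux is the identity ${\rm gcd}\left(a/d,b/d,m'\right)=1$, which is precisely what allows statement~1 to be reused verbatim for the primed problem.
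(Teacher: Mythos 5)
Your proposal is correct and follows essentially the same route as the paper's own proof: the same reduction to proving only the sufficiency of each statement via mutual exclusivity and exhaustiveness, the same use of Lemma~\ref{lemma:isom}, Theorem~\ref{2n3:solv} and Corollary~\ref{2n3:solv:cor} in the case $d=1$, and the same transfer to the primed problem via ${\rm gcd}\left(a/d,b/d,m'\right)=1$ and Lemma~\ref{hst1} when $d\neq 1$, including the determination of $\alpha_0$ and the congruence argument for statement~3(a). The only differences are presentational (you spell out the identification $A_2=A$, $B_2=B$ when $m_1=1$ and the equivalence $p_j\mid b/d \iff dp_j\mid b$, which the paper leaves implicit).
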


\begin{remark}\rm
In the statement 2 of Theorem~\ref{solvability:ip} the sequence $\{X'_n\}_{n=0}^{\infty}$, when $m\neq d$, may be defined by the formula, analogous to the formula~\eqref{solvability:ip:d:eq1:solexpl}, applied to the initial problem~\eqref{1:dash}, \eqref{ic:dash}. When $m=d$, then  evidently $X'_n=0$ for all $n\in\mathbb{Z}_+$.
\end{remark}

\begin{proof}
The sufficient conditions of all three statements of Theorem~\ref{solvability:ip} are mutually exclusive and they exhaust all possibilities. Therefore  it is enough to prove the
sufficiency for all of three statements of this theorem.

	Prove the sufficiency of the statement~1. Let $d=1$.
   If either $m_1=1$, or $m_2=1$, then the claimed statement follows from Corollary~\ref{2n3:solv:cor}.
    Further let $m_1\neq 1$ and $m_2\neq 1$.

    Set the initial conditions:
    \begin{equation}\label{2:ic}
        X_{1,0}=Y_{1,0}\in\mathbb{Z}_{m_1},
    \end{equation}
    \begin{equation}\label{3:ic}
        X_{2,0}=Y_{2,0}\in\mathbb{Z}_{m_2}
    \end{equation}
    for equations~\eqref{2} and~\eqref{3} respectively.
	
    According to Lemma~\ref{lemma:isom}, the sequence $\{X_n\}_{n=0}^{\infty}$ is a solution of the initial problem~\eqref{1}, \eqref{ic} if and only if the sequence $\{X_{1,n}\}_{n=0}^{\infty}$ is a solution of the initial problem~\eqref{2}, \eqref{2:ic} and the sequence $\{X_{2,n}\}_{n=0}^{\infty}$ is a solution of the initial problem~\eqref{3}, \eqref{3:ic}.   By the statement~1 of Theorem~\ref{2n3:solv}, the initial problem~\eqref{2}, \eqref{2:ic} has a solution for any $Y_{1,0}\in\mathbb{Z}_{m_1}$. According to the statement~2 of Theorem~\ref{2n3:solv}, the initial problem~\eqref{3}, \eqref{3:ic} has a solution if and only if $Y_{2,0}$
    satisfies~\eqref{3:y0}. Hence, the initial problem~\eqref{1}, \eqref{ic} has a solution if and only if the condition~\eqref{3:y0} is fulfilled, moreover this solution is unique and has the form~\eqref{xnpsidef}, where $X_{1,n}$ and $X_{2,n}$ are defined by the formulas~\eqref{2:sol} and~\eqref{3:sol} respectively.

Prove the sufficiency of the statement~2. Let $d\neq 1$, $d\mid f_n$ for all $n\in\mathbb{Z}_+$. Additionally, let either $m_2'=1$, or $m_2'\neq 1$ and~\eqref{3:sol:y0} be fulfilled.
    Since ${\rm gcd}\left(a/d, b/d, m'\right)=1$, we can apply the sufficiency of the statement~1 (which is already proved) to the initial problem~\eqref{1:dash}, \eqref{ic:dash}.   Due to that statement, the initial problem~\eqref{1:dash}, \eqref{ic:dash} has a unique solution $X'_n=[x'_n]_{m'}\ (n\in\mathbb{Z}_+)$.    By Lemma~\ref{hst1}, for any sequence $\{\alpha_n\}_{n=0}^{\infty}$ of $\{0,\ldots,d-1\}$ the formula~\eqref{lem:xnlocaldeflem} defines the solution of Equation~\eqref{1}.

    We choose $\alpha_0$ such that \eqref{ic} is fulfilled, i.~e.
    $[x'_0+\alpha_0m']_m=[y_0]_m$.    The initial condition \eqref{ic:dash} implies $[x'_0]_{m'}=[y_0]_{m'}$, and the following congruence holds $x'_0\equiv y_0\pmod{m'}$. Then $\beta=\frac{y_0-x'_0}{m'}\in\mathbb{Z}$. Divide $\beta$ on $d$ with remainder. Then there exist $q\in\mathbb{Z}$ and $\alpha_0\in\{0,\ldots,d-1\}$ such that
   $\beta=qd+\alpha_0$. Therefore,
   $$[x'_0+\alpha_0m']_m=[x'_0+(\beta-qd)m']_m=[x'_0+y_0-x'_0-qm]_m=[y_0]_m.$$
    Therefore for the chosen $\alpha_0$ and any sequence $\{\alpha_n\}_{n=1}^{\infty}$ of $\{0,\ldots,d-1\}$ the formula~\eqref{th1:x'n:to:xn} defines a solution of the initial problem~\eqref{1}, \eqref{ic}. By Lemma~\ref{hst1}, the
    expression ~\eqref{th1:x'n:to:xn}  gives infinitely many solutions of this initial problem (see also \eqref{lem:xnlocaldeflem}).

    We prove that the general solution of the initial problem~\eqref{1}, \eqref{ic} is defined by the formula~\eqref{th1:x'n:to:xn}. Let $\{X_n\}_{n=0}^{\infty}$ be an arbitrary solution of this initial problem. Then by Lemma~\ref{hst1}, this solution has the form~\eqref{lem:xnlocaldeflem}, where $\{X'_n\}_{n=0}^{\infty}$ is a solution of Equation~\eqref{1:dash}. Moreover, $\{X'_n\}_{n=0}^{\infty}$ must satisfy the initial condition~\eqref{ic:dash}. We have proved that the initial problem~\eqref{1:dash}, \eqref{ic:dash} has a unique solution. Hence the general solution of the initial problem~\eqref{1}, \eqref{ic} has the form~\eqref{th1:x'n:to:xn}.

    Now prove the sufficiency of the statement~3. Assume $d\nmid f_n$ for some $n\in\mathbb{Z}_+$.
		The equality \eqref{1} for this $n$  is equivalent to the congruence $bx_{n+1}-ax_n\equiv f_n\pmod m$. Hence,
	\begin{equation}\label{fnnotdl}
		f_n\equiv d\cdot\left(\frac {b}{d} x_{n+1}-\frac {a}{d} x_n\right)\pmod m.
	\end{equation}
     Since $d\mid m$, the condition~\eqref{fnnotdl} means $d\mid f_n$, which is a contradiction the assumption.
    Therefore, if $d\nmid f_n$ for some $n\in\mathbb{Z}_+$, then Equation~\eqref{1} has no solutions.
	Now  suppose that $d\neq 1$, $d\mid f_n,\ n\in\mathbb{Z}_+$, $m'_2\neq 1$ and the equality~\eqref{3:sol:y0} is not fulfilled. Assume the contrary, that the initial problem \eqref{1}, \eqref{ic} has a solution $X_n=[x_n]_m\ (n\in\mathbb{Z}_+)$. Then the congruence \eqref{fnnotdl} is fulfilled for all $n\in
\mathbb{Z}_+$ and the sequence $X'_n=[x_n]_{m'}\ (n\in\mathbb{Z}_+)$ is a solution of the initial problem~\eqref{1:dash}, \eqref{ic:dash}. Since ${\rm gcd}\left(a/d, b/d, m'\right)=1$, we can apply the sufficiency of the statement~1 (which is already proved) to this initial problem. Therefore, if $m'_2\neq 1$ and $\{X'_n\}_{n=0}^{\infty}$ is a solution of the initial problem~\eqref{1:dash}, \eqref{ic:dash}, then $Y_{2,0}'=[y_0]_{m'_2}$  must satisfy~\eqref{3:sol:y0}. This contradicts the assumption.
\end{proof}

The following theorem is a solvability theorem for Equation~\eqref{1}. This theorem also yields the explicit form for the general solution of Equation~\eqref{1}.

\begin{theorem}\label{solvability:equation}
	 The following statements hold.
	\begin{enumerate}
		\item Equation~\eqref{1} has a finite amount of solutions iff $d=1$. Moreover, the amount of these solutions is equal to $m_1$ and in this case
		\begin{enumerate}
			\item If $m_2=1$, then the general solution of Equation~\eqref{1} has the form
			\begin{equation}\label{gen1}
				X_n = B^{-n}A^nX_0 + \sum_{s=0}^{n-1}A^sB^{-s-1}F_{n-s-1}, \quad n\in\mathbb{N},
			\end{equation}
			where $X_0$ is an arbitrary element of $\mathbb{Z}_m$.
			\item If $m_1=1$, then the unique solution of Equation~\eqref{1} has the form
			\begin{equation}\label{gen2}
				X_n = - \sum_{s=0}^{{\rm ind}(B) -1}A^{-s-1}B^sF_{n+s},\quad
				n\in\mathbb{Z}_+.
			\end{equation}
			\item If $m_1\neq 1$ and $m_2\neq 1$, then the general solution of Equation~\eqref{1} has the form
			\begin{multline}\label{gen_sol_1}
				X_0 = \psi\left(
					X_{1,0},
					- \sum_{s=0}^{{\rm ind}(B_2)-1}A_2^{-s-1}B_2^sF_{2,s}
				\right),\\
				X_n = \psi\left(
					B_1^{-n}A_1^nX_{1,0} + \sum_{s=0}^{n-1}A_1^sB_1^{-s-1}F_{1,n-s-1},
					- \sum_{s=0}^{{\rm ind}(B_2)-1}A_2^{-s-1}B_2^sF_{2,n+s}
				\right),\quad n\in\mathbb{N},
			\end{multline}
			where $X_{1,0}$ is an arbitrary element of $\mathbb{Z}_{m_1}$.
		\end{enumerate}
		
		\item Equation~\eqref{1} has infinitely many solutions iff $d\neq 1$ and $d\mid f_n$ for all $n\in\mathbb{Z}_+$. The general solution in this case has the form~\eqref{lem:xnlocaldeflem},
		where $X'_n=[x'_n]_{m'}\ (n\in\mathbb{Z}_+)$ is the general solution of Equation~\eqref{1:dash}, and  $\{\alpha_n\}_{n=0}^{\infty}$ is an arbitrary sequence of $\{0,\ldots,d-1\}$.
Moreover, the sequence $\{\alpha_n\}_{n=0}^{\infty}$ and the solution $\{X'_n\}_{n=0}^{\infty}$ of Equation ~\eqref{1:dash} with $x'_n\in\{0,\ldots,m'-1\}$ are uniquely determined by the solution~$\{X_n\}_{n=0}^{\infty}$ of Equation~\eqref{1}.
\item Equation~\eqref{1} has no solutions iff $d\nmid f_n$ for some $n\in\mathbb{Z}_+$.
	\end{enumerate}
\end{theorem}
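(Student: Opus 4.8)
The plan is to mirror the structure of the proof of Theorem~\ref{solvability:ip}. First I would observe that the three hypotheses---$d=1$; $d\neq 1$ together with $d\mid f_n$ for all $n\in\mathbb{Z}_+$; and $d\nmid f_n$ for some $n\in\mathbb{Z}_+$---are mutually exclusive and jointly exhaustive. Hence it suffices to prove the sufficiency of each of the three statements. The conceptual difference from Theorem~\ref{solvability:ip} is that Equation~\eqref{1} carries no initial condition, so the free parameter $X_0$ (respectively $X_{1,0}$) in the explicit part is never pinned down and no compatibility constraint of the type~\eqref{3:y0} is imposed on the implicit part; this is exactly why a whole family of $m_1$ solutions survives when $d=1$, and why the equation is always solvable whenever $d\mid f_n$.

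For the sufficiency of statement~1, assume $d=1$ and split into the three cases furnished by Lemma~\ref{lemels}. If $m_2=1$ (so $m_1=m$), Corollary~\ref{2n3:solv:cor} gives the general solution~\eqref{1:2n3:solv:cor:2} with $X_0$ ranging freely over $\mathbb{Z}_m$, yielding exactly $m=m_1$ solutions and formula~\eqref{gen1}. If $m_1=1$, then $B$ is nilpotent by Lemma~\ref{lemels} and Corollary~\ref{2n3:solv:cor} provides the unique solution~\eqref{1:2n3:solv:cor:1}, i.e. $m_1=1$ solution, formula~\eqref{gen2}. In the remaining case $m_1\neq1$, $m_2\neq1$, Lemma~\ref{lemma:isom} makes the solutions of~\eqref{1} correspond bijectively to pairs (solution of~\eqref{2}, solution of~\eqref{3}); by Theorem~\ref{2n3:solv} the explicit Equation~\eqref{2} has general solution~\eqref{2:sol} parametrized by the free $X_{1,0}\in\mathbb{Z}_{m_1}$, while the implicit Equation~\eqref{3} has its single solution~\eqref{3:sol}. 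Counting gives $m_1\cdot 1=m_1$ solutions, and recombining through $\psi$ produces~\eqref{gen_sol_1} (the $n=0$ line being separated because~\eqref{2:sol} is stated for $n\in\mathbb{N}$).

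For the sufficiency of statement~2, assume $d\neq1$ and $d\mid f_n$ for all $n$. Since ${\rm gcd}(a/d,b/d,m')=1$, I would apply the already-proved statement~1 to the primed Equation~\eqref{1:dash}: it has a finite, nonempty set of $m'_1\ge1$ solutions $\{X'_n\}$. Lemma~\ref{hst1} then identifies every solution of~\eqref{1} with a representation $X_n=[x'_n+\alpha_n m']_m$ for such an $\{X'_n\}$ and an arbitrary sequence $\{\alpha_n\}$ in $\{0,\ldots,d-1\}$; as $d\ge2$ and the index $n$ runs over infinitely many values, this produces infinitely many solutions, and the general solution is exactly~\eqref{lem:xnlocaldeflem}. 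The asserted uniqueness of $\{\alpha_n\}$ and of the reduced solution with $x'_n\in\{0,\ldots,m'-1\}$ is the uniqueness clause of Lemma~\ref{hst1}.

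The sufficiency of statement~3 is the divisibility obstruction already isolated in the proof of Theorem~\ref{solvability:ip}: if a solution existed, Equation~\eqref{1} would force the congruence~\eqref{fnnotdl}, and since $d\mid m$ this yields $d\mid f_n$, contradicting the hypothesis. I expect the only genuinely delicate points to be bookkeeping rather than conceptual: verifying that the solution count collapses to the single value $m_1$ uniformly across the three sub-cases of statement~1 (in particular reconciling $m_1=1$, $m_1=m$, and $1\neq m_1\neq m$), and confirming in statement~2 that applying statement~1 to~\eqref{1:dash} indeed guarantees a nonempty solution set, so that Lemma~\ref{hst1} is not vacuous.
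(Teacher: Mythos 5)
Your proposal is correct and follows essentially the same route as the paper: reduce to proving sufficiency via the mutual exclusivity and exhaustiveness of the three hypotheses, handle $d=1$ through the $m_1$/$m_2$ decomposition, handle $d\neq 1$ with $d\mid f_n$ by applying statement~1 to Equation~\eqref{1:dash} and invoking Lemma~\ref{hst1}, and dispose of statement~3 via the divisibility obstruction. The only cosmetic difference is that where the paper cites Theorem~\ref{solvability:ip} (for the case $m_1\neq 1$, $m_2\neq 1$ and for statement~3), you inline the underlying arguments (Lemma~\ref{lemma:isom} with Theorem~\ref{2n3:solv}, and the congruence~\eqref{fnnotdl}), which is the same machinery one layer down.
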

\begin{remark}\rm
 Since ${\rm gcd}\left(a/d, b/d, m'\right)=1$, in the statement 2 of Theorem~\ref{solvability:equation} the general solution of Equation $\{X'_n\}_{n=0}^{\infty}$ may be defined by the formula, analogous to  formulas~\eqref{gen1}--\eqref{gen_sol_1}, applied to Equation~\eqref{1:dash}.
\end{remark}
\begin{proof}
The sufficient conditions of all three statements of Theorem~\ref{solvability:equation} are mutually exclusive and they exhaust all possibilities. Therefore  it is enough to prove the
sufficiency for all of three statements of this theorem.

We prove the sufficiency of the statement 1 of Theorem \ref{solvability:equation}. Let $d=1$.	If either $m_1=1$ or $m_2=1$, then the claimed statement implies from the corollary~\ref{2n3:solv:cor}.
	Let $m_1\neq 1$ and $m_2\neq 1$. The statement~1 of Theorem~\ref{solvability:ip} implies that if there exists a solution of the initial problem~\eqref{1}, \eqref{ic}, then it is defined uniquely by the given $Y_{1,0}$, where $Y_{1,0}$ is an arbitrary element of the ring $\mathbb{Z}_{m_1}$. Therefore, the amount of solutions of  Equation ~\eqref{1}  is equal to $m_1$. The form \eqref{gen_sol_1} of the general solution of Equation \eqref{1} is obtained with the help of the general solution  \eqref{solvability:ip:d:eq1:solexpl} of the initial problem \eqref{1}, \eqref{ic}.

	Prove the sufficiency of the statement~2 of Theorem \ref{solvability:equation}. Let $d\neq 1$ and $d\mid f_n$ for all $n\in\mathbb{Z}_+$. Since ${\rm gcd}\left(\frac{a}d, \frac{b}d, m'\right)=1$,
	we can apply the sufficiency of the statement~1 (which is already proved) to Equation~\eqref{1:dash}. Due to that statement, Equation~\eqref{1:dash} has $m'_1$ solutions. Let $X'_n=[x'_n]_{m'}$ ($n\in\mathbb{Z}_+$) be the general solution of this equation. By Lemma \ref{hst1}, the general solution of Equation~ \eqref{1} has the form~\eqref{lem:xnlocaldeflem}, where $\{\alpha_n\}_{n=0}^{\infty}$ is an arbitrary sequence of $\{0,\ldots,d-1\}$. Moreover, by Lemma \ref{hst1}, Equation~\eqref{1} has infinitely many solutions.

	Prove the sufficiency of the statement~3 of Theorem \ref{solvability:equation}. Let $d\neq 1$ and $d\nmid f_n$ for some $n\in\mathbb{Z}_+$. By the statement~3 of Theorem~\ref{solvability:ip}, for any $Y_0\in\mathbb{Z}_m$ the initial problem~\eqref{1}, \eqref{ic} has no solutions. Hence, Equation~\eqref{1} has no solutions.
\end{proof}

The following corollary of Theorem~\ref{solvability:equation} yields the solvability of Equation \eqref{1} in the case of an invertible element $A$.

\begin{corollary}
If $A$ is an invertible element of $\mathbb{Z}_m$, then Equation \eqref{1} always has a solution. Moreover, the amount of solutions for Equation \eqref{1} is equal to $m_1$.
\end{corollary}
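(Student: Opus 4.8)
The plan is to reduce the entire statement to the already-established Theorem~\ref{solvability:equation} by observing that invertibility of $A$ forces $d=1$. First I would recall the standard characterization of units in a residue class ring: $A=[a]_m$ is an invertible element of $\mathbb{Z}_m$ if and only if ${\rm gcd}(a,m)=1$. This is the only external fact the argument needs, and it is classical.

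Next, with this characterization in hand, I would show $d=1$. By definition $d={\rm gcd}(a,b,m)$, so $d$ divides ${\rm gcd}(a,m)$. Since $A$ is invertible we have ${\rm gcd}(a,m)=1$, and a positive divisor of $1$ is $1$; hence $d=1$. This is the crux of the corollary, and it is essentially immediate once the unit characterization is recorded.

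Having established $d=1$, I would invoke the statement~1 of Theorem~\ref{solvability:equation}: the condition $d=1$ is precisely equivalent to Equation~\eqref{1} having a finite amount of solutions, and in that case the amount equals $m_1$. This directly yields the counting assertion claimed in the corollary. To finish, I would note that $m_1$ is by construction a (possibly empty) product of prime powers $p_j^{k_j}$, so $m_1\ge 1$; therefore the number of solutions is at least one, which gives the existence claim ``Equation~\eqref{1} always has a solution''.

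I do not expect a genuine obstacle here: the only substantive step is the reduction $A\text{ invertible}\Rightarrow d=1$, and everything else is a direct appeal to Theorem~\ref{solvability:equation} together with the trivial bound $m_1\ge 1$. If anything deserves a sentence of care, it is making explicit that coprimality of $a$ and $m$ passes to $d$ via divisibility of greatest common divisors, which I would state in one line.
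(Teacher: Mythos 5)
Your proposal is correct and follows exactly the argument the paper intends: the paper states this as an immediate corollary of Theorem~\ref{solvability:equation} without a written proof, and the reduction you give ($A$ invertible $\Rightarrow \gcd(a,m)=1 \Rightarrow d=\gcd(a,b,m)=1$, then statement~1 of the theorem plus $m_1\ge 1$) is the same step the paper itself uses elsewhere (e.g.\ in the proof of Corollary~\ref{solv:cor:1}). Nothing is missing.
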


Theorem~\ref{solvability:equation} also implies the following criteria of the existence and uniqueness of a solution for Equation~\eqref{1}.

\begin{corollary}\label{solv:cor:2}
	Equation~\eqref{1} has a unique solution iff $d=1$ and $m_1=1$.
In particular, the homogeneous equation
\begin{equation}\label{homeq}
BX_{n+1}=AX_n,\quad n\in\mathbb{Z}_+
\end{equation}
has only trivial solution iff $d=1$ and $m_1=1$.
\end{corollary}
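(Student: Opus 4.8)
The plan is to read the corollary off directly from the trichotomy established in Theorem~\ref{solvability:equation}, whose three statements are mutually exclusive and jointly exhaustive. The crucial observation is that ``a unique solution'' means that the number of solutions is finite and equal to $1$. According to the statement~1 of Theorem~\ref{solvability:equation}, Equation~\eqref{1} has a finite amount of solutions if and only if $d=1$, and in that case the amount equals $m_1$. So I would argue the two implications separately, using this fact.

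For the forward implication, suppose Equation~\eqref{1} has a unique solution. Then it has a finite (namely, one) amount of solutions, so by the statement~1 of Theorem~\ref{solvability:equation} we obtain $d=1$, and the number of solutions equals $m_1$; since this number is $1$, we conclude $m_1=1$. Conversely, if $d=1$ and $m_1=1$, then by the same statement~1 Equation~\eqref{1} has exactly $m_1=1$ solution, i.~e. a unique solution. One may additionally note that the cases $d\neq 1$ fall under statements~2 or~3 of Theorem~\ref{solvability:equation}, yielding either infinitely many or no solutions, so they are incompatible with uniqueness; this makes the trichotomy argument airtight.

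For the ``in particular'' part concerning the homogeneous equation~\eqref{homeq}, I would first note that \eqref{homeq} is the special case of Equation~\eqref{1} with $F_n=0$ for all $n\in\mathbb{Z}_+$. The constant zero sequence $X_n=0$ always satisfies \eqref{homeq}, so \eqref{homeq} possesses at least the trivial solution; consequently, \eqref{homeq} has only the trivial solution if and only if it has a unique solution. Since the quantities $d={\rm gcd}(a,b,m)$ and $m_1$ depend only on $A$ and $B$ and not on the free terms $F_n$, they are unchanged when passing to \eqref{homeq}. Applying the already proved first part of the corollary to \eqref{homeq} then gives that it has only the trivial solution if and only if $d=1$ and $m_1=1$.

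There is no substantial obstacle in this argument: it is a bookkeeping consequence of the trichotomy. The only points requiring a little care are the identification of ``unique solution'' with ``finitely many solutions, amounting to exactly one'' (so that the case analysis in Theorem~\ref{solvability:equation} applies), together with the remark that for the homogeneous equation the trivial solution is automatically present, whence uniqueness and triviality of the solution set coincide.
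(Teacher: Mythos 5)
Your proposal is correct and follows essentially the same route as the paper, which states this corollary as an immediate consequence of Theorem~\ref{solvability:equation} without further argument: uniqueness forces the finite case of the trichotomy, hence $d=1$ and $m_1=1$, and conversely. Your additional observations --- that the cases $d\neq 1$ yield infinitely many or no solutions (so zero solutions cannot be confused with ``finitely many''), and that the homogeneous equation~\eqref{homeq} always possesses the trivial solution while $d$ and $m_1$ do not depend on $\{F_n\}_{n=0}^{\infty}$ --- are exactly the bookkeeping the paper leaves implicit.
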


\begin{corollary}\label{solv:cor:1}
	Equation~\eqref{1} has a unique solution iff $A$ is invertible and $B$ is nilpotent. Moreover, this solution has the form \eqref{gen2}.
	\end{corollary}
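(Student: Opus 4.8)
The plan is to derive this corollary directly from Corollary~\ref{solv:cor:2}, which already characterizes unique solvability of Equation~\eqref{1} by the pair of conditions $d=1$ and $m_1=1$. It therefore suffices to show that the conjunction ``$d=1$ and $m_1=1$'' is equivalent to the conjunction ``$A$ is invertible and $B$ is nilpotent''.

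First I would dispose of the nilpotency condition by invoking statement~3 of Lemma~\ref{lemels}: the element $B$ is nilpotent in $\mathbb{Z}_m$ precisely when $m_1=1$. This matches the second half of each conjunction, so the equivalence reduces, under the standing hypothesis $m_1=1$ (equivalently $B$ nilpotent), to proving that $d=1$ if and only if $A$ is invertible.

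The core of the argument is the standard fact that $A=[a]_m$ is invertible in $\mathbb{Z}_m$ iff ${\rm gcd}\left(a,m\right)=1$, that is, iff no prime $p_j$ ($j=1,\ldots,r$) divides $a$. When $m_1=1$, the definition of $m_1$ forces every prime $p_j$ of $m$ to divide $b$. Consequently a prime $p_j$ divides $d={\rm gcd}\left(a,b,m\right)$ if and only if it divides $a$, since the conditions $p_j\mid b$ and $p_j\mid m$ then hold automatically. Hence $d=1$ exactly when no $p_j$ divides $a$, which is exactly the invertibility of $A$. This establishes the required equivalence in both directions, and together with Corollary~\ref{solv:cor:2} yields the criterion.

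For the ``moreover'' clause I would observe that once $d=1$ and $m_1=1$, the unique solution is already available in closed form: it is formula~\eqref{1:2n3:solv:cor:1} of Corollary~\ref{2n3:solv:cor} (equivalently, case~1(b) of Theorem~\ref{solvability:equation}), which coincides with~\eqref{gen2}. I expect the only mildly delicate point to be the bookkeeping of prime divisors in the step $d=1\Leftrightarrow{\rm gcd}\left(a,m\right)=1$ under the assumption $m_1=1$; every other step is a direct appeal to the results already proved above.
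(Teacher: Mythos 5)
Your proposal is correct and takes essentially the same approach as the paper: reduce to Corollary~\ref{solv:cor:2}, prove that the pair ($d=1$, $m_1=1$) is equivalent to the pair ($A$ invertible, $B$ nilpotent) using statement~3 of Lemma~\ref{lemels} for the nilpotency half, and obtain formula~\eqref{gen2} from the earlier solvability results. The only cosmetic difference is in the direction $d=1\Rightarrow A$ invertible (under $m_1=1$): the paper invokes statement~2 of Lemma~\ref{lemels} (so $A_2$ is invertible and $A_2=A$ because $m_2=m$), while you argue directly with the prime divisors of $m$; both are sound.
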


\begin{proof}
    According to Corollary~\ref{solv:cor:2}, Equation~\eqref{1} has a unique solution if and only if $d=1$ and $m_1=1$.

    Hence, it suffices to prove that the conditions $B$ is nilpotent and $A$ is invertible are collectively equivalent to the conditions $d=1$ and $m_1=1$.

     At first, prove the sufficiency of the mentioned statement: let $B$ be nilpotent, and $A$ be invertible. Let us proof that $d=1$, $m_1=1$. By the statement~3 of Lemma~\ref{lemels}, the nilpotency of $B$ implies $m_1=1$. If $A$ is invertible, then ${\rm gcd}(a, m)=1$, and hence $d=1$.

    Now prove the inverse statement. Let $d=1$ and $m_1=1$. By the statement~3 of Lemma~\ref{lemels}, the condition $m_1=1$ yields $B$ is nilpotent. By Lemma~\ref{lemels}, if $d=1$, then $A_2$ is an invertible element of $\mathbb{Z}_{m_2}$. Since $m_1=1$, this implies $A_2=A$ is invertible.
    The representation~\eqref{gen2} for the unique solution of Equation~\eqref{1} follows from Theorem \ref{solvability:equation}.
\end{proof}

\begin{corollary}\label{solv:cor:4}
If the homogeneous equation \eqref{homeq}
has only trivial solution, then for any sequence $\{F_n\}_{n=0}^{\infty}$
Equation~\eqref{1} has a unique solution.
Moreover, the unique solution of Equation \eqref{1} has the form \eqref{gen2}.		
\end{corollary}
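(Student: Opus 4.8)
The plan is to reduce the statement entirely to the characterizations already obtained, exploiting the fact that the conditions distinguishing the solvability types depend only on the coefficients $A$, $B$ and the modulus $m$, and not on the inhomogeneity $\{F_n\}_{n=0}^{\infty}$.

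First I would invoke Corollary~\ref{solv:cor:2}. The hypothesis asserts that the homogeneous equation~\eqref{homeq} admits only the trivial solution; by the second assertion of Corollary~\ref{solv:cor:2} this is equivalent to the pair of arithmetic conditions $d=1$ and $m_1=1$. The crucial observation is that $d={\rm gcd}\left(a,b,m\right)$ and $m_1=\prod_{j\colon p_j\nmid b}p_j^{k_j}$ are determined solely by $a$, $b$ and $m$; the sequence $\{F_n\}_{n=0}^{\infty}$ plays no role whatsoever in their definition. Consequently, once the homogeneous equation has only the trivial solution, the equalities $d=1$ and $m_1=1$ persist regardless of which forcing sequence $\{F_n\}_{n=0}^{\infty}$ we subsequently attach to Equation~\eqref{1}.

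Thus for an arbitrary $\{F_n\}_{n=0}^{\infty}$ the coefficients of Equation~\eqref{1} still satisfy $d=1$ and $m_1=1$, and the first assertion of Corollary~\ref{solv:cor:2} (read in the converse direction) guarantees that Equation~\eqref{1} has a unique solution. For the explicit form I would note that $m_1=1$ places us in case~1(b) of Theorem~\ref{solvability:equation}; equivalently, by the statement~3 of Lemma~\ref{lemels} the element $B$ is nilpotent, while $d=1$ forces $A$ to be invertible, so Corollary~\ref{solv:cor:1} applies directly and yields that the unique solution is given by~\eqref{gen2}.

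I do not expect a serious obstacle here: the entire content of the corollary is the observation that the coefficient conditions governing uniqueness decouple from the right-hand side, which is precisely the Fredholm-type phenomenon the statement is meant to record. The only point requiring care is to state explicitly that $d$ and $m_1$ are functions of the coefficients alone, so that the biconditional of Corollary~\ref{solv:cor:2} may legitimately be transported from the homogeneous problem to the inhomogeneous one for every choice of $\{F_n\}_{n=0}^{\infty}$.
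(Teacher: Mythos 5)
Your proposal is correct and follows essentially the same route as the paper's own proof: apply Corollary~\ref{solv:cor:2} to translate the hypothesis into $d=1$ and $m_1=1$, observe that these conditions involve only $a$, $b$, $m$ and hence are independent of $\{F_n\}_{n=0}^{\infty}$, then read Corollary~\ref{solv:cor:2} backwards for existence and uniqueness and invoke Corollary~\ref{solv:cor:1} for the representation~\eqref{gen2}. Your explicit remark that the governing conditions decouple from the right-hand side is exactly the (implicit) pivot of the paper's argument, so nothing is missing.
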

\begin{proof}
Let Equation \eqref{homeq} has only trivial solution. Then Corollary~ \ref{solv:cor:2} implies $d=1$, $m_1=1$ and, therefore, for any sequence $\{F_n\}_{n=0}^{\infty}$ of $\mathbb{Z}_m$ Equation \eqref{1} has  a unique solution.
The form \eqref{gen2} for the unique solution of Equation \eqref{1} follows from Corollary \ref{solv:cor:1}.
\end{proof}

\section{Examples}

\begin{example}\label{ex:1}
	Consider the following equation over $\mathbb{Z}_6$:
	\begin{equation}\label{ex:1:eq}
		[3]_6X_{n+1}=[2]_6X_n+F_n,\quad n\in\mathbb{Z}_+.
	\end{equation}
	
	Let $F_n=[f_n]_6$. Determine the values: $A=[2]_6$, $B=[3]_6$, $m=6$. Let $b=3$, $a=2$, hence $d=1$. On evidence, $m_1=2$ and $m_2=3$. Also determine: $A_2=[2]_3$, $B_2=[3]_3$, ${\rm ind}(B_2)=1$. Let $Y_0=[y_0]_6$. By the statement~1 of Theorem~\ref{solvability:ip}, the initial problem~\eqref{ex:1:eq}, \eqref{ic} has a solution if and only if
	\begin{equation*}
		[y_0]_3 = - \sum_{s=0}^{{\rm ind}(B_2)-1}A_2^{-s-1}B_2^sF_{2,s}=
		-[2]_3^{-1}[f_0]_3=[f_0]_3,
	\end{equation*}
	i.~e.
	\begin{equation}\label{ex:1:cond:has:sol}
		[y_0]_3=[f_0]_3.
	\end{equation}
	
	Further assume that the solution of the initial problem~\eqref{ex:1:eq}, \eqref{ic} exists, i.~e. the equality~\eqref{ex:1:cond:has:sol} is fulfilled. This solution is unique.
	
	The representation of this solution may be found by the formula~\eqref{solvability:ip:d:eq1:solexpl}.
	Evaluate: $A_1=[2]_2$, $B_1=[3]_2$, $E_1=[3]_2^{-1}=[1]_2$, $E_2=[2]_3^{-1}=[2]_3$ (see also the formulas~\eqref{psi:Ei:Def}). Choose $e_1=1$, $e_2=2$. According to the formula~\eqref{psi:def}, the isomorphism $\psi\colon \mathbb{Z}_2\oplus\mathbb{Z}_3\rightarrow \mathbb{Z}_6$ is defined as follows:
	\begin{equation}\label{ex:1:psi:def}
		\psi(T_1, T_2)=[3t_1+4t_2]_6, \quad \forall T_1=[t_1]_2,\ \forall T_2=[t_2]_3.
	\end{equation}

	Evaluate
\begin{equation}\label{ex:1:psi:arg1}	
		B_1^{-n}A_1^nX_{1,0} + \sum_{s=0}^{n-1}A_1^sB_1^{-s-1}F_{1,n-s-1}=
		[3]_2^{-n}[2]_2^nX_{1,0} + \sum_{s=0}^{n-1}[2]_2^s[3]_2^{-s-1}F_{1,n-s-1}=
		F_{1,n-1}, \quad n\in\mathbb{N},
	\end{equation}
	\begin{equation}\label{ex:1:psi:arg2}
		- \sum_{s=0}^{{\rm ind}(B_2)-1}A_2^{-s-1}B_2^sF_{2,n+s}=
		- [2]_3^{-1}F_{2,n}=F_{2,n}, \quad n\in\mathbb{Z}_+.
	\end{equation}
	
	Substituting~\eqref{ex:1:psi:def}, \eqref{ex:1:psi:arg1} and \eqref{ex:1:psi:arg2} into \eqref{solvability:ip:d:eq1:solexpl}, we obtain the following form for the unique solution of the initial problem~\eqref{ex:1:eq}, \eqref{ic} :
	\begin{equation}
		X_0=Y_0, \quad X_n=\psi(
			[f_{n-1}]_2,
			[f_n]_3		) =
		[3f_{n-1}+4f_n]_6=3F_{n-1}+4F_n, \quad n\in\mathbb{N}.
	\end{equation}
	
	By Theorem~\ref{solvability:equation},  Equation~\eqref{ex:1:eq}  has $m_1=2$ solutions, and the general solution of this equation has the form:
	$$
		X_0 = \psi\left([\beta]_2, [f_0]_3\right)=[3\beta+4f_0]_6,
	$$
	$$
		X_n =\psi\left(
			[f_{n-1}]_2,
			[f_n]_3		\right)=
		[3f_{n-1}+4f_{n}]_6=3F_{n-1}+4F_n, \quad n\in\mathbb{N},
	$$
where 	$\beta $ may be equal to 0 or 1.
\end{example}

\begin{example}\label{ex:2}
	Consider the following equation over $\mathbb{Z}_9$:
	\begin{equation}\label{ex:2:eq}
		[3]_9X_{n+1}=[2]_9X_n+F_n,\quad n\in\mathbb{Z}_+.
	\end{equation}
	
	Let $Y_0=[y_0]_9, F_n=[f_n]_9$. Determine the values: $A=[2]_9$, $B=[3]_9$, $m=9$. Let $b=3$, $a=2$, hence $d=1$. On evidence, $m_1=1$ and $m_2=9$. Here $B$ is nilpotent and $A$ is invertible elements of $\mathbb{Z}_9$. Evaluate: ${\rm ind}(B)=2$.  By Corollary \ref{solv:cor:1}, Equation~\eqref{ex:2} has a unique solution. This solution has the form
\begin{equation}\label{ex:2:formula}
		X_n=-\sum_{s=0}^{{\rm ind}(B)-1}[2]_9^{-s-1}[3]_9^sF_{n+s}=
				-[5]_9F_{n}
		-[25]_9[3]_9F_{n+1}
		=4F_{n}+6F_{n+1}
		, \quad n\in\mathbb{Z}_+.
	\end{equation}
	
	The initial problem~\eqref{ex:2:eq}, \eqref{ic} has a solution if and only if $Y_0=4F_0+6F_{1}$. This solution is unique and has the form~\eqref{ex:2:formula}.
\end{example}

\begin{example}\label{ex:3}
	Consider the following equation over $\mathbb{Z}_{12}$:
	\begin{equation}\label{ex:3:eq}
		[6]_{12}X_{n+1}=[2]_{12}X_n+F_n,\quad n\in\mathbb{Z}_+.
	\end{equation}
	
	Let $F_n=[f_n]_{12}$.
	Determine the values: $A=[2]_{12}$, $B=[6]_{12}$, $m=12$, $a=2$, $b=6$. That implies that $d=2$.	If $f_n$ is odd for some $n\in\mathbb{Z}_+$, then by the statement 3 of Theorem \ref{solvability:equation} Equation~\eqref{ex:3:eq} has no solutions.
	
	Further let $f_n$ be even for all $n\in\mathbb{Z}_+$.
	
	Determine: $m'=6$, $m'_1=2$, $m'_2=3$, $B'=[3]_6$, $A'=[1]_6$, $F'_n=\left[\frac{f_n}{2}\right]_6$. Let $Y_0=[y_0]_{12}$. Also, $Y'_0=[y_0]_6$, $B'_1=[3]_2$, $A'_1=[1]_2$, $B'_2=[3]_{3}$, $A'_2=[2]_3$, $F'_{1, n}=\left[\frac{f_n}{2}\right]_2$, $F'_{2,n}=\left[\frac{f_n}{2}\right]_3$. Here ${\rm ind}(B'_2)=1$.
	
	By the statement~2 of Theorem~\ref{solvability:ip}, the initial problem~\eqref{ex:3:eq}, \eqref{ic} has a solution if and only if
	$$
		[y_0]_3=- \sum_{s=0}^{{\rm ind}(B'_2)-1}(A'_2)^{-s-1}(B'_2)^sF'_{2, s}=-F'_{2,0}=2\left[\frac{f_0}{2}\right]_3=[f_0]_3,
	$$
	i.~e.
	\begin{equation}\label{ex:3:y:cond}
		[y_0]_3=[f_0]_3.
	\end{equation}
	
	The corresponding equation~\eqref{1:dash} over $\mathbb{Z}_6$ has the form
	\begin{equation}\label{ex:4:eq:dash}
		[3]_6X'_{n+1}=X'_n+F'_n, \quad n\in\mathbb{Z}_+.
	\end{equation}
	
	Further let~\eqref{ex:3:y:cond} be fulfilled.
	
    By the statement~1 of Theorem~\ref{solvability:ip}, the initial problem~\eqref{ex:4:eq:dash}, \eqref{ic:dash} has a unique solution, which may be obtained by the formula~\eqref{solvability:ip:d:eq1:solexpl}.

	Evaluate:
	
	\begin{multline}\label{ex:3:psi:arg1}
		\left(B'_1\right)^{-n}\left(A'_1\right)^nY'_{1,0} + \sum_{s=0}^{n-1}\left(A'_1\right)^s\left(B'_1\right)^{-s-1}F'_{1,n-s-1}=\\=\left([3]_2\right)^{-n}Y'_{1,0} + \sum_{s=0}^{n-1}\left([3]_2\right)^{-s-1}F'_{1,n-s-1}=Y'_{1,0} + \sum_{s=0}^{n-1}F'_{1,n-s-1},
	\end{multline}
	
	\begin{equation}\label{ex:3:psi:arg2}
		- \sum_{s=0}^{{\rm ind}\left(B'_2\right)-1}\left(A'_2\right)^{-s-1}\left(B'_2\right)^sF'_{2,n+s}=
		- \left[\frac{f_n}{2}\right]_3.
	\end{equation}
	As in Example \ref{ex:1}, the isomorphism $\psi\colon\mathbb{Z}_2\oplus\mathbb{Z}_3\rightarrow\mathbb{Z}_6$ is defined by the formula \eqref{ex:1:psi:def}.    Substituting~\eqref{ex:1:psi:def}, \eqref{ex:3:psi:arg1} and \eqref{ex:3:psi:arg2} into ~\eqref{solvability:ip:d:eq1:solexpl}, we obtain the unique solution of the initial problem problem~\eqref{ex:4:eq:dash}, \eqref{ic:dash}:
	
	\begin{multline}\label{X'}
		X'_0=Y'_0, \quad X'_n=
		\psi\left(
		Y'_{1,0} + \sum_{s=0}^{n-1}F'_{1,n-s-1},
		-\left[\frac{f_n}{2}\right]_2
		\right)=\\=
		\left[
			3\left(y_0 + \sum_{s=0}^{n-1}\frac{f_s}{2}\right) + 4\left(- \frac{f_n}{2}\right)
		\right]_6=
					\left[3y_0 + 3\sum_{s=0}^{n-1}\frac{f_s}{2} + 4 f_n\right]_6
		,
		\quad
		n\in\mathbb{N}.
	\end{multline}
	
Hence, if $f_n$ ($n\in\mathbb{Z}_+$) is even and~\eqref{ex:3:y:cond} is fulfilled, then by the statement~2 of Theorem~\ref{solvability:ip} the initial problem~\eqref{ex:3:eq}, \eqref{ic} has infinitely many solutions. Moreover, the general solution of this initial problem has the following form (see formulas~\eqref{th1:x'n:to:xn} and \eqref{X'}).
	
    \begin{equation}\label{ff}
    	X_0=Y_0,\quad
        X_n=\left[
        	3y_0 + 3\sum_{s=0}^{n-1}\frac{f_s}{2} + 4 f_n+6\alpha_n
        \right]_{12}, \quad n\in\mathbb{N},
    \end{equation}
    where $\{\alpha_n\}_{n=0}^{\infty}$ is an arbitrary sequence of the elements $0$ and $1$.

By the statement~2 of Theorem~\ref{solvability:equation}, Equation~\eqref{ex:3:eq} has infinitely many solutions. Moreover, the general solution of Equation~\eqref{ex:3:eq} has the form (see  formulas~\eqref{lem:xnlocaldeflem} and~\eqref{ff}):
	\begin{equation*}
		X_0=\left[3\beta+4f_0+6\alpha_n\right]_{12},\quad
		X_n=\left[
			3\beta + 3\sum_{s=0}^{n-1}\frac{f_s}{2} + 4 f_n+6\alpha_n
		\right]_{12}
		, \quad n\in\mathbb{N},
	\end{equation*}
	where $\beta$ and $\alpha_n$ ($n\in\mathbb{Z}_+$) are arbitrary elements of $\{0,1\}$.
\end{example}

\begin{example}\label{ex:4}
	Consider the following equation over $\mathbb{Z}_{12}$:
	\begin{equation}\label{ex:4:eq}
[9]_{12}X_{n+1}=[6]_{12}X_n+F_n,\quad n\in\mathbb{Z}_+.
	\end{equation}

Let $Y_0=[y_0]_{12}$, $F_n=[f_n]_{12}$.	
	Determine the values: $A=[6]_{12}$, $B=[9]_{12}$, $m=12$. Let $a=6$, $b=9$. This implies that $d=3$. By the statement~2 of Theorem~\ref{solvability:ip},	if $3\nmid f_n$ for some $n\in\mathbb{Z}_+$, then by the statement 3 of Theorem \ref{solvability:equation} Equation~\eqref{ex:4:eq} has no solutions.

Further let $3\mid f_n$ for all $n\in\mathbb{Z}_+$. Determine $B'=[3]_4$, $A'=[2]_4$, $m'=4$, $m'_1=4$, $m'_2=1$.  The corresponding equation~\eqref{1:dash} over $\mathbb{Z}_4$ has the form:
    \begin{equation}\label{ex:4:eq:prime}
           [3]_4X'_{n+1}=[2]_4X'_n+F'_n,
    		\quad n\in\mathbb{Z}_+,
    \end{equation}
	where $F'_n=\left[\frac{f_n}{3}\right]_4$.
	
	By the first statement of Theorem~\ref{solvability:ip}, for any $Y'_0\in\mathbb{Z}_4$ the initial problem~\eqref{ex:4:eq:prime}, \eqref{ic:dash} has a unique solution and this solution is defined by the formula:
    \begin{multline}
    	X'_0=Y'_0,\quad
    	X'_n=
    	\left(B'\right)^{-n}\left(A'\right)^nY'_0 + \sum_{s=0}^{n-1}\left(A'\right)^s\left(B'\right)^{-s-1}F'_{n-s-1}
    	=\\=
    	[3]_4^{-n}[2]_4^nY'_0 + \sum_{s=0}^{n-1}[2]_4^s[3]_4^{-s-1}F'_{n-s-1}
    	=[2]_4^{n}Y'_0 + \sum_{s=0}^{n-1}[2]_4^s[3]_4^{-s-1}F'_{n-s-1},
    	\quad n\in\mathbb{N}.
    \end{multline}

	More precise,
	\begin{equation*}
		X'_0=Y'_0,\quad X'_1=2Y'_0+3F'_0,\quad X'_n=3F'_{n-1}+2F'_{n-2},\quad n=2,3,\ldots.
	\end{equation*}

	If $3\mid f_n,\ n\in\mathbb{Z}_+$, then by the second statement of Theorem~\ref{solvability:ip}, for any $Y_0\in\mathbb{Z}_{12}$ the initial problem~\eqref{ex:4:eq}, \eqref{ic} has infinitely many solutions. Moreover, the general solution of this initial problem has the following form (see formula~\eqref{th1:x'n:to:xn}):
	$$
		X_0=Y_0,\quad X_n=[x'_n+4\alpha_n]_{12},\quad n\in\mathbb{N},
	$$
	i.~e.
\begin{equation}\label{genex4}					
			X_0=Y_0,\quad X_1=[2y_0+f_0+4\alpha_1]_{12},\quad
X_n=\left[f_{n-1}+2\frac{f_{n-2}}{3}+4\alpha_n\right]_{12},\quad n=2,3,\ldots.
		\end{equation}
	Here $\{\alpha_n\}_{n=0}^{\infty}$ is an arbitrary sequence of $\{0,1,2\}$.

If $3\mid f_n,\ n\in\mathbb{Z}_+$, then by the statement~2 of Theorem~\ref{solvability:equation},  Equation~\eqref{ex:4:eq} has infinitely many solutions. Moreover, the general solution of this equation has the following form (see the formula~\eqref{lem:xnlocaldeflem}).
	\begin{equation}\label{x_nex4}
		X_n=[x'_n+4\alpha_n]_{12}, \quad n\in\mathbb{Z}_+,
	\end{equation}
	where
  $\{\alpha_n\}_{n=0}^{\infty}$ is an arbitrary sequence of $\{0,1,2\}$ and the sequence
$X'_n=[x'_n]_4\ (n\in\mathbb{Z}_+)$ is the general solution of Equation \eqref{ex:4:eq:prime} which is defined as follows:
$$
	  X_0'=[x_0']_4,\quad
		X'_1=2X'_0+3F'_0,\quad X'_n=3F'_{n-1}+2F'_{n-2},\quad n=2,3,\ldots.
	$$
  Now the general solution~\eqref{x_nex4} of Equation~\eqref{ex:4:eq} can be written in a more convenient form, which is  similar to \eqref{genex4}:
		$$X_0=[x_0]_{12},\quad X_1=[2x_0+f_0+4\alpha_1]_{12},\quad  X_n=\left[f_{n-1}+2\frac{f_{n-2}}3+4\alpha_n\right]_{12},\quad n=2,3,\ldots,$$
		where $x_0$ is an arbitrary integer and $\{\alpha_n\}_{n=0}^{\infty}$ is an arbitrary sequence of  $\{0,1,2\}$.

\end{example}

\end{document}